\newtheorem{thm}{Theorem}[section]
\begin{document} 
\title{UNIQUENESS OF SOLUTIONS TO BOLTZMANN EQUATIONS}
\author{Rafael Galeano Andrades$^{\lowercase{1}}$\,\ and \,\ Mario Almanza Caro$^{\lowercase{2}}$}
\address{Universidad de Cartagena, Facultad de Ciencias Exactas y Naturales}
\address{Departamento de Matemáticas.}
\email{$^1$rgaleanoa@unicartagena.edu.co}
\address{Universidad de Sucre, Facultad de Educaci\'on y Ciencias}
\address{Departamento de Matem\'aticas}
\email{$^2$mario.almanza@unisucre.edu.co}
\subjclass[2010]{35Q20-35B38}
\keywords{Generalized solutions, Boltzmann equation, uniqueness, renormalized solutions, Banach Fixed Point Theorem.}

\maketitle

\begin{abstract}
By means of Banach Fixed Point Theorem, the uniqueness of the Boltzmann equation generalized solutions in Sobolev spaces in $L^{1}\left(\mathbb{R}^{+}\times\mathbb{R}^{n}\times\mathbb{R}^{n}\right)$, can be proved,as well as the uniqueness of renormalized solutions of the Boltzmann equation.
\end{abstract}

\section{Introduction}
	
The problem is posed as follows: Let $f\left(t,x,v\right):\,\mathbb{R}^{+}\times\mathbb{R}^{n}\times\mathbb{R}^{n}\rightarrow\mathbb{R}^{+}$ measurable such that it satisfies: 
	
\begin{equation}
\left\{ \begin{array}{l}
\frac{\partial f}{\partial t}+v\cdot\nabla_{x}f=Q\left(f,f\right)\\
f\left(0,x,v\right)=f_{0}\left(x,v\right),
\end{array}\right.\label{eq:1}
\end{equation}

where 

\begin{equation}
Q\left(f,f\right)(v)=\int_{\mathbb{R}^{n}}\int_{\left|w\right|=1}B\left(w,\left|u-v\right|\right)\left[f\left(u'\right)f\left(v'\right)-f\left(u\right)f\left(v\right)\right]dwdu,
\end{equation}
	
as a result of the quadratic operator

\begin{equation}
Q\left(f,g\right)\left(v\right)=\frac{1}{2}\int_{\mathbb{R}^{n}}\int_{\left|w\right|=1}B\left(w,\left|u-v\right|\right)\left[f\left(v'\right)g\left(u'\right)+f\left(u'\right)f\left(v'\right)\right.\left.-f\left(u\right)g\left(v\right)-f\left(v\right)g\left(u\right)\right]dwdu
\end{equation}

in the case where $f=g$. It will bemoreover assumed that  $Q\left(f,g\right)\left(v\right)=0$
si $f\neq g$, $u'+v'=u+v$ and $\left|u'\right|^{2}+\left|v'\right|^{2}=\left|u\right|^{2}+\left|v\right|^{2}$. $B$ satisfies the following properties:

\begin{itemize}
	\item[] $B_{0}$) $B$ just depends $\left|v-u\right|$ y $\left|v-u,w\right|$.
	\item[] $B_{1}$) $B\in L_{loc}^{\infty}\left(\mathbb{R}^{n},S^{2}\right)$.
	\item[] $B_{2}$) $B\left(v,w\right)\leq\frac{b_{1}\left|\left(v,w\right)\right|}{\left|v\right|}\left(1+\left|v\right|^{\mu}\right)$.
	\item[] $B_{3}$) $\int_{S^{2}}B\left(v,w\right)dw\geq b\left|v\right|\left(1+\left|v\right|^{\mu}\right)^{-1}$.
\end{itemize}

	
A classical solution $f$ of (\ref{eq:1}) is a function $f\in C^{1}\left(\left[0,T\right]\times\mathbb{R}^{n}\times\mathbb{R}^{n}\right)$
that satisfies (\ref{eq:1}).
	
Let $f^{\#}\left(t,x,v\right)=f\left(t,x+vt,v\right)$, as a result

\begin{equation}
 \left\{ \begin{array}{lc}
             \frac{df}{dt}\left(t,x,v\right) =\frac{\partial f^{\#}}{\partial t}\left(t,x,v\right)+v\cdot\nabla_{x}f\left(t,x+vt,v\right)=Q^{\#}\left(f,f\right)(v), \\
             f^{\#}\left(0,x,v\right) =f\left(0,x,v\right)=f_{0}\left(x,v\right).
             \end{array}
   \right.
\end{equation}

If, it is supposed ${\displaystyle f^{\#}\in L^{1}\left([0,T]\times\mathbb{R}^{n}\times\mathbb{R}^{n}\right)}$
y ${\displaystyle Q^{\#}\left(f,f\right)(v)\in L^{1}\left([0,T]\times\mathbb{R}^{n}\times\mathbb{R}^{n}\right)}$,

then 

\begin{equation}\label{eq:2}
f^{\#}\left(t,x,v\right)=f_{0}\left(x,v\right)+\int_{0}^{t}Q^{\#}\left(t,f\right)(v)d\tau.
\end{equation}

The functions satisfying (\ref{eq:2}), are called generalized solutions of  (\ref{eq:1}). A solution ${\displaystyle f\in L^{1}\left(\mathbb{R}^{+}\times\mathbb{R}^{n}\times\mathbb{R}^{n}\right)}$ is called a renormalized solution of the Boltzmann equation if:
	
\[
\frac{Q\left(f,f\right)}{1+f}\in L^{1}\left(\mathbb{R}^{+}\times\mathbb{R}^{n}\times\mathbb{R}^{n}\right),
\]

and if for any continuous Lipchitz function $\beta:\,\mathbb{R}^{+}\rightarrow\mathbb{R}$
satisfying $\left|\beta'(t)\right|\leq\frac{c}{1+t}$ for all $t\geq0$, $c>0$, it follows:
	
\begin{equation}
\frac{\partial\beta}{\partial t}+v\cdot\nabla_{x}\beta(f)=\beta'(f)Q\left(f,f\right)\label{eq:3}
\end{equation}

in the sense of the distributions. In the particular case where $\beta(f)=f$, it is reduced to the problem studied by Diperna-Lions. In the particular case where $\beta(t)=\ln(1+t)$, $f$ is a Boltzmann's classical solution.
	
The first existence theorem on time-dependent Boltzmann equation solutions was released in 1932 when Carleman proved the existence of global solutions for the Cauchy problem in the especial homogeneous case (the solution does not depend on $x$). In 1963, Grad built the first local result close to the Maxwellian and in 1974, Ukai built global solutions close to the Maxwellian, extending the result of Grad. The Grad procedure was used to build solutions in $L^{\infty}$ for initial value and boundary problems (for example see Ukai (1976), Nishida (1976), Shizuta (1977), Ukay and Asano (1983), Asano (1985)). In the $L^{1}$ scheme, Diperna and Lions (1989) constructed $L^{1}$-global solutions with minimal hypothesis about the initial data. These solutions are called renormalized solutions. In 2009, a result given by Gressmann, Strain and Arkiv, prove the existence of Boltzmann equation classic solutions. The Boltzmann equation with the term force, adds the term  $F\cdot\nabla_{v}f$ to the left side of the equation. In this sense, in 2011, Galeano and others proved the existence in $L^{1}$ for small data and in 2007, Galeano proved the existence of solutions close to the vacuum.
	
\section{Desarrollo}

Let ${\displaystyle f_{1}^{\#}\in L^{1}\left(\left[0,T\right]\times\mathbb{R}^{n}\times\mathbb{R}^{n}\right)}$ y ${\displaystyle f_{2}^{\#}\in L^{1}\left(\left[0,T\right]\times\mathbb{R}^{n}\times\mathbb{R}^{n}\right)}$ 	be Boltzmann equation generalized solution and let  ${\displaystyle g=f_{1}^{\#}-f_{2}^{\#}\in L^{1}\left(\left[0,T\right]\times\mathbb{R}^{n}\times\mathbb{R}^{n}\right)}$ be   solutions to the generalized Boltzmann equation, that is to say:

\begin{align*}
f_{1}^{\#}\left(t,x,v\right) & =f_{0}\left(x,v\right)+\int_{0}^{t}Q^{\#}\left(f_{1},f_{1}\right)(v)d\tau,\\
f_{2}^{\#}\left(t,x,v\right) & =f_{0}\left(x,v\right)+\int_{0}^{t}Q^{\#}\left(f_{2},f_{2}\right)(v)d\tau.
\end{align*}

then:

\begin{eqnarray*}
f_{1}^{\#}\left(t,x,v\right)-f_{2}^{\#}\left(t,x,v\right)&=&f_{0}\left(x,v\right)+\int_{0}^{t}Q^{\#}\left(f_{1},f_{1}\right)(v)d\tau-f_{0}\left(x,v\right)\\
&&\hspace{1cm}-\int_{0}^{t}Q^{\#}\left(f_{2},f_{2}\right)(v)d\tau.
\end{eqnarray*}
    
That is to say: 

\begin{align*}
\mathcal{G} \left(t,x,v\right)&=\int_{0}^{t}Q^{\#}\left(f_{2}+g,f_{2}+g\right)(v)d\tau-\int_{0}^{t}Q^{\#}\left(f_{2},f_{2}\right)(v)d\tau,\\
\mathcal{G}(0,x,v)&=0.
\end{align*}

Let 

\[
F\left(g\right)\left(t,x,v\right):=\int_{0}^{t}Q^{\#}\left(f_{2}+g,f_{2}+g\right)(v)d\tau-\int_{0}^{t}Q^{\#}\left(f_{2},f_{2}\right)(v)d\tau
\]

be defined in  $L^{1}\left(\left[0,T\right]\times\mathbb{R}^{n}\times\mathbb{R}^{n}\right)$. It is noticed that $\mathcal{G}=0$ is a fixed point of $F$.It will be proved by means of Banach Fixed Theorem with some additional hypothesis, that zero is the only fixed point of $F$, in order to conclude that $\mathcal{G}\equiv0$ and therefore $f_{1}=f_{2}$.

\begin{thm}
If it is supposed $0\leq L<1$, such that 

\begin{eqnarray*}
\int_{\mathbb{R}^{n}}\int_{\left|w\right|=1}\left|f_{2}(u)\right|\left|B\left(u,\left|u-v\right|\right)\right|dwdu&\leq&\frac{L\left|v\right|}{b_{1}\left(1+\left|v\right|^{\mu}\right)},\\
\int_{\mathbb{R}^{n}}\int_{\left|w\right|=1}\left|g_{i}(u)\right|\left|B\left(w,\left|u-v\right|\right)\right|dwdu&\leq&\frac{L\left|v\right|}{b_{1}\left(1+\left|v\right|^{\mu}\right)},
\end{eqnarray*}

where $i=1,2$. Then $F$ is a continuous lipchitz.
\end{thm}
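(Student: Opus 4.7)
The plan is to reduce the estimate $\|F(g_1)-F(g_2)\|_{L^1}$ to bilinear estimates of the form $\|Q^{\#}(h,k)\|_{L^1}$, and then apply Fubini together with the two hypothesized pointwise bounds on the collision frequencies generated by $f_2$ and the $g_i$.

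First, I would use the symmetric bilinearity of $Q$ (and hence of $Q^{\#}$) to rearrange
\begin{align*}
F(g_1)-F(g_2) &= \int_0^t\bigl[Q^{\#}(f_2+g_1,f_2+g_1)-Q^{\#}(f_2+g_2,f_2+g_2)\bigr]\,d\tau \\
&= \int_0^t\bigl[Q^{\#}(f_2,g_1-g_2)+Q^{\#}(g_1-g_2,f_2) \\
&\qquad\quad +Q^{\#}(g_1-g_2,g_1)+Q^{\#}(g_2,g_1-g_2)\bigr]\,d\tau,
\end{align*}
where the last line uses the identity $Q(g_1,g_1)-Q(g_2,g_2)=Q(g_1-g_2,g_1)+Q(g_2,g_1-g_2)$. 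Each of the four terms has $g_1-g_2$ in one slot and one of $f_2,g_1,g_2$ in the other, so it suffices to prove a bound of the shape $\|Q^{\#}(h,g_1-g_2)\|_{L^1}\leq CL\|g_1-g_2\|_{L^1}$ (and similarly with the slots swapped) whenever the collision frequency of $h$ satisfies the hypothesis.

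Second, I would split $Q=Q^{+}-Q^{-}$ and handle the gain and loss parts separately. For the loss part $Q^{-}(h,k)(v)=k(v)\int\!\!\int B(w,|u-v|)h(u)\,du\,dw$, Fubini together with the change of variable $y=x+v\tau$ (which is built into the $\#$-operation and has unit Jacobian) yields
\[
\int_0^T\!\!\int_{\mathbb{R}^n}\!\!\int_{\mathbb{R}^n}|k(t,x,v)|\Bigl(\int_{\mathbb{R}^n}\!\!\int_{|w|=1} B(w,|u-v|)|h(u)|\,dw\,du\Bigr)dv\,dx\,dt,
\]
at which point the hypothesis bounds the parenthesis by $\frac{L|v|}{b_1(1+|v|^{\mu})}$, a quantity uniformly bounded in $v$ by property $B_2$. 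This gives $\|Q^{-\#}(h,k)\|_{L^1}\leq CL\|k\|_{L^1}$. The gain term is then reduced to a loss-type expression by the classical involution $(u,v)\leftrightarrow(u',v')$ on the collision manifold, which has unit Jacobian, preserves $B$ by assumption $B_0$, and therefore allows the same estimate to be extracted. Summing the four bounds and integrating in $\tau\in[0,T]$ produces
\[
\|F(g_1)-F(g_2)\|_{L^1([0,T]\times\mathbb{R}^n\times\mathbb{R}^n)}\leq C\,T\,L\,\|g_1-g_2\|_{L^1},
\]
which is exactly the Lipschitz property of $F$, with continuity following as an immediate consequence.

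The main obstacle is the gain term: since the two hypotheses are stated in terms of the post-collisional variable $u$, they do not apply directly to integrands of the form $h(u')k(v')$, and one must carry out the $(u,v)\leftrightarrow(u',v')$ change of variables with care, relying on $B_0$ to ensure that $B$ transforms consistently. Once this reduction is made, the four contributions are structurally identical and the estimate closes linearly in $L$, so only the routine bookkeeping with Fubini and the triangle inequality remains.
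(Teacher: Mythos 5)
Your proposal follows essentially the same route as the paper: isolate a factor of $g_1-g_2$ from the difference of the quadratic collision terms (the paper does this by brute-force expansion and cancellation rather than by invoking bilinearity, but its resulting eight integrals are exactly your four bilinear pieces split into gain and loss parts), then bound each remaining collision-frequency integral via the hypothesized estimate $\frac{L\left|v\right|}{b_{1}\left(1+\left|v\right|^{\mu}\right)}$ combined with $B_{2}$, and pull out $\left\Vert g_{1}-g_{2}\right\Vert _{L^{1}}$ to obtain the Lipschitz bound. If anything you are more careful than the paper on the gain terms, where the paper applies the hypotheses directly to primed-velocity integrands without carrying out the $(u,v)\leftrightarrow(u',v')$ change of variables that you correctly identify as necessary.
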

    
\begin{proof} \hspace{0.01cm}\\

$\displaystyle\left|F\left(g_{1}\right)\left(t,x,v\right)-F\left(g_{2}\right)\left(t,x,v\right)\right|$
\smallskip
\begin{eqnarray*}
&\leq&\int_{0}^{t}\left|Q^{\#}\left(f_{2}+g_{1},f_{2}+g_{1}\right)(v)-Q^{\#}\left(f_{2},f_{2}\right)\right.(v)\\
& &\hspace{0cm}\left.-Q^{\#}\left(f_{2}+g_{2},f_{2}+g_{2}\right)(v)+Q^{\#}\left(f_{2},f_{2}\right)(v)\right|d\tau\\
&=&\int_{0}^{t}\left|Q^{\#}\left(f_{2}+g_{1},f_{2}+g_{1}\right)(v)\right.\left.-Q^{\#}\left(f_{2}+g_{2},f_{2}+g_{2}\right)(v)\right|d\tau.
\end{eqnarray*}

Then 
\medskip

$\left|Q^{\#}\left(f_{2}+g_{1},f_{2}+g_{1}\right)(v)-Q^{\#}\left(f_{2}+g_{2},f_{2}+g_{2}\right)(v)\right|$

\begin{eqnarray*}
&=&\left|\int_{\mathbb{R}^{n}}\int_{\left|w\right|=1}B\left(w,\left|u-v\right|\right) \left[\left(f_{2}+g_{1}\right)\left(t,x+tv,u'\right)\left(f_{2}+g_{1}\right)\left(t,x+tv,v'\right)\right.\right.\\
&&\hspace{1.5cm}-\left(f_{2}+g_{1}\right)\left(t,x+tv,u\right)\left(f_{2}+g_{1}\right)\left(t,x+tv,v\right)\\
		& &\hspace{1.5cm}-\left.\left.\left(f_{2}+g_{2}\right)\left(t,x+tv,u'\right)\left(f_{2}+g_{2}\right)\left(t,x+tv,v'\right)\right.\right.\\
		&&\hspace{1.5cm}\left.+\left(f_{2}+g_{2}\right)\left(t,x+tv,u\right)\left(f_{2}+g_{2}\right)\left(t,x+tv,v\right)\right]dudw\Biggl|\\
&=&\left|\int_{\mathbb{R}^{n}}\int_{\left|w\right|=1}B\left(w,\left|u-v\right|\right)\left[\left(f_{2}\left(t,x+tv,u'\right)+g_{1}\left(t,x+tv,u'\right)\right)\left(f_{2}\left(t,x+tv,v'\right)+g_{1}\left(t,x+tv,v'\right)\right)\right.\right.\\
		&& -\left(f_{2}\left(t,x+tv,u\right)+g_{1}\left(t,x+tv,u\right)\right)\left(f_{2}\left(t,x+tv,v\right)+g_{1}\left(t,x+tv,v\right)\right)\\
		&&-\left(f_{2}\left(t,x+tv,u'\right)+g_{2}\left(t,x+tv,u'\right)\right)\left(f_{2}\left(t,x+tv,v'\right)+g_{2}\left(t,x+tv,v'\right)\right)\\
		&& +\left.\left.\left(f_{2}\left(t,x+tv,u\right)+g_{2}\left(t,x+tv,u\right)\right)\left(f_{2}\left(t,x+tv,v\right)+g_{2}\left(t,x+tv,v\right)\right)\right]dwdu\right.\Biggl|\\
&=& \left|\int_{\mathbb{R}^{n}}\int_{\left|w\right|=1}B\left(w,\left|u-v\right|\right)\left[\left(f_{2}^{\#}\left(t,x+t\left(v-u'\right),u'\right)+g_{1}^{\#}\left(t,x+t\left(v-u'\right),u'\right)\right)\right.\right.\\
&&\hspace{4cm}\left(f_{2}^{\#}\left(t,x+\left(v-v'\right),v'\right)+g_{1}^{\#}\left(t,x+t\left(v-v'\right),v'\right)\right)\\
		&& -\left.\left.\left(f_{2}^{\#}\left(t,x+\left(v-u\right),u\right)+g_{1}^{\#}\left(t,x+t\left(v-u\right),u\right)\right)\left(f_{2}^{\#}\left(t,x,v\right)+g_{1}^{\#}\left(t,x,v\right)\right)\right.\right.\\
		&& -\left(f_{2}^{\#}\left(t,x+\left(v-u'\right),u'\right)+g_{2}^{\#}\left(t,x+t\left(v-u'\right),u'\right)\right)\\
		&&\hspace{4cm}\left(f_{2}^{\#}\left(t,x+t\left(v-v'\right),v'\right)+g_{2}^{\#}\left(t,x+t\left(v-v'\right),v'\right)\right)\\
		&&
		+\left(f_{2}^{\#}\left(t,x+t\left(v-u\right),u\right)+g_{2}^{\#}\left(t,x+t\left(v-u\right),u\right)\right)\left(f_{2}^{\#}\left(t,x,v\right)+g_{2}^{\#}\left(t,x,v\right)\right)dwdu\Biggl|\\
& = & \left|\int_{\mathbb{R}^{n}}\int_{\left|w\right|=1}B\left(w,\left|u-v\right|\right)\left[f_{2}^{\#}\left(t,x+t\left(v-u'\right),u'\right)f_{2}^{\#}\left(t,x+t\left(v-v'\right),v'\right)\right.\right.\\
&&\hspace{1.5cm}+\left.\left.f_{2}^{\#}\left(t,x+t\left(v-u'\right),u'\right)g_{1}^{\#}\left(t,x+t\left(v-v'\right),v'\right)\right.\right.\\
&&\hspace{1.5cm}+g_{1}^{\#}\left(t,x+t\left(v-u'\right),u'\right)f_{2}^{\#}\left(t,x+t\left(v-v'\right),v'\right)\\
&&\hspace{1.5cm}+g_{1}^{\#}\left(t,x+t\left(v-u'\right),u'\right)g_{1}^{\#}\left(t,x+t\left(v-v'\right),v'\right)\\
&&\hspace{1.5cm}-f_{2}^{\#}\left(t,x+t\left(v-u\right),u\right)f_{2}^{\#}\left(t,x,v\right)-f_{2}^{\#}\left(t,x+t\left(v-u\right),u\right)g_{1}^{\#}\left(t,x,v\right)\\
&&\hspace{1.5cm}-g_{1}^{\#}\left(t,x+t\left(v-u\right),u\right)f_{2}^{\#}\left(t,x,v\right)-g_{1}^{\#}\left(t,x+t\left(v-u\right),u\right)g_{1}^{\#}\left(t,x,v\right)\\
&&\hspace{1.5cm}-f_{2}^{\#}\left(t,x+t\left(v-u'\right),u'\right)f_{2}^{\#}\left(t,x+t\left(v-v'\right),v'\right)\\
&&\hspace{1.5cm}-f_{2}^{\#}\left(t,x+t\left(v-u'\right),u'\right)g_{2}^{\#}\left(t,x+t\left(v-v'\right),v'\right)\\
&&\hspace{1.5cm}-g_{2}^{\#}\left(t,x+t\left(v-u'\right),u'\right)f_{2}^{\#}\left(t,x+t\left(v-v'\right),v'\right)\\
&&\hspace{1.5cm}-g_{2}^{\#}\left(t,x+t\left(v-u'\right),u'\right)g_{2}^{\#}\left(t,x+t\left(v-v'\right),v'\right)\\
&&\hspace{1.5cm}+f_{2}^{\#}\left(t,x+t\left(v-u\right),u\right)f_{2}^{\#}\left(t,x,v\right)+f_{2}^{\#}\left(t,x+t\left(v-u\right),u\right)g_{2}^{\#}\left(t,x,v\right)\\
&&\hspace{1.5cm}+g_{2}^{\#}\left(t,x+t\left(v-u\right),u\right)f_{2}^{\#}\left(t,x,v\right)+g_{2}^{\#}\left(t,x+t\left(v-u\right),u\right)g_{2}^{\#}\left(t,x,v\right)dwdu\Biggl|\\
&=&\left|\int_{\mathbb{R}^{n}}\int_{\left|w\right|=1}B\left(w,\left|u-v\right|\right) \left[f_{2}^{\#}\left(t,x+t\left(v-u'\right),u'\right)\left(g_{1}^{\#}\left(t,x+t\left(v-v'\right),v'\right)\right.\right.\right.\\
&&\hspace{5cm}\left.\left.-g_{2}^{\#}\left(t,x+t\left(v-v'\right),v'\right)\right)\right]dwdu\\
\end{eqnarray*}

\begin{eqnarray*}
&&+\int_{\mathbb{R}^{n}}\int_{\left|w\right|=1}B\left(w,\left|u-v\right|\right) \left[f_{2}^{\#}\left(t,x+t\left(v-v'\right),v'\right)\left(g_{1}^{\#}\left(t,x+t\left(v-u'\right),u'\right)\right.\right.\\
&&\hspace{5cm}\left.\left.-g_{2}^{\#}\left(t,x+t\left(v-u'\right),u'\right)\right)\right]dwdu\\
&&+\int_{\mathbb{R}^{n}}\int_{\left|w\right|=1}B\left(w,\left|u-v\right|\right) \left[f_{2}^{\#}\left(t,x+t\left(v-u\right),u\right)\left(g_{2}^{\#}\left(t,x,v\right)-g_{1}^{\#}\left(t,x,v\right)\right)\right]dwdu\\
&&+\int_{\mathbb{R}^{n}}\int_{\left|w\right|=1}B\left(w,\left|u-v\right|\right) \left[f_{2}^{\#}\left(t,x,v\right)\left(g_{2}^{\#}\left(t,x+t\left(v-u\right),u\right)-g_{1}^{\#}\left(t,x+t\left(v-u\right),u\right)\right)\right]dwdu\\
&&+\int_{\mathbb{R}^{n}}\int_{\left|w\right|=1}B\left(w,\left|u-v\right|\right) \left[g_{1}^{\#}\left(t,x+t\left(v-u'\right),u'\right)g_{1}^{\#}\left(t,x+t\left(v-v'\right),v'\right)\right.dwdu\\
&&\hspace{1.5cm}-g_{1}^{\#}\left(t,x+t\left(v-u\right),u\right)g_{1}^{\#}\left(t,x,v\right)-g_{2}^{\#}\left(t,x+t\left(v-u'\right),u'\right)g_{2}^{\#}\left(t,x+t\left(v-v'\right),v'\right)\\
&&\hspace{1.5cm}\left.+g_{2}^{\#}\left(t,x+t\left(v-u\right),u\right)g_{2}^{\#}\left(t,x,v\right)\right]dwdu\Biggl|\\
&=&\left|\int_{\mathbb{R}^{n}}\int_{\left|w\right|=1}B\left(w,\left|u-v\right|\right)f_{2}^{\#}\left(t,x+t\left(v-u'\right),u'\right)\left(g_{1}^{\#}\left(t,x+t\left(v-v'\right),v'\right)\right.\right.\\
&&\hspace{3cm}\left.\left.-g_{2}^{\#}\left(t,x+t\left(v-v'\right),v'\right)\right)\right.dwdu\\
&&+\left.\int_{\mathbb{R}^{n}}\int_{\left|w\right|=1}B\left(w,\left|u-v\right|\right)f_{2}^{\#}\left(t,x+t\left(v-v'\right),v'\right)\left(g_{1}^{\#}\left(t,x+t\left(v-u'\right),u'\right)\right.\right.\\
&&\hspace{3cm}\left.\left.-g_{2}^{\#}\left(t,x+t\left(v-u'\right),u'\right)\right)\right.dwdu\\
&&+\left.\int_{\mathbb{R}^{n}}\int_{\left|w\right|=1}B\left(w,\left|u-v\right|\right)f_{2}^{\#}\left(t,x+t\left(v-u\right),u\right)\left(g_{2}^{\#}\left(t,x,v\right)-g_{1}^{\#}\left(t,x,v\right)\right)\right.dwdu\\
&&+\left.\int_{\mathbb{R}^{n}}\int_{\left|w\right|=1}B\left(w,\left|u-v\right|\right)f_{2}^{\#}\left(t,x,v\right)\left(g_{2}^{\#}\left(t,x+t\left(v-u\right),u\right)-g_{1}^{\#}\left(t,x+t\left(v-u\right),u\right)\right)\right.dwdu\\
&&+\left.\int_{\mathbb{R}^{n}}\int_{\left|w\right|=1}B\left(w,\left|u-v\right|\right)\left[g_{1}^{\#}\left(t,x+t\left(v-u'\right),u'\right)g_{1}^{\#}\left(t,x+t\left(v-v'\right),v'\right)\right.\right.\\
&&\hspace{2cm}-g_{1}^{\#}\left(t,x+t\left(v-u'\right),u'\right)g_{2}^{\#}\left(t,x+t\left(v-v'\right),v'\right)\\
&&\hspace{2cm}+g_{1}^{\#}\left(t,x+t\left(v-u'\right),u'\right)g_{2}^{\#}\left(t,x+t\left(v-v'\right),v'\right)\\
&&\hspace{2cm}-g_{2}^{\#}\left(t,x+t\left(v-v'\right),v'\right)g_{2}^{\#}\left(t,x+t\left(v-u'\right),u'\right)\\
&&\hspace{2cm}+g_{2}^{\#}\left(t,x+t\left(v-u\right),u\right)g_{2}^{\#}\left(t,x,v\right)\\
&&\hspace{2cm}-g_{2}^{\#}\left(t,x+t\left(v-u\right),u\right)g_{1}^{\#}\left(t,x,v\right)+g_{2}^{\#}\left(t,x+t\left(v-u\right),u\right)g_{1}^{\#}\left(t,x,v\right)\\
&&\hspace{2cm}\left.\left.+g_{1}^{\#}\left(t,x+t\left(v-u\right),u\right)g_{1}^{\#}\left(t,x,v\right)\right]dwdu\right.\Biggl|\\
&=&\left|\int_{\mathbb{R}^{n}}\int_{\left|w\right|=1}B\left(w,\left|u-v\right|\right)f_{2}^{\#}\left(t,x+t\left(v-u'\right),u'\right)\left(g_{1}^{\#}\left(t,x+t\left(v-v'\right),v'\right)\right.\right.\\
&&\hspace{3cm}\left.-g_{2}^{\#}\left(t,x+t\left(v-v'\right),v'\right)\right)dwdu\\
&&+\int_{\mathbb{R}^{n}}\int_{\left|w\right|=1}B\left(w,\left|u-v\right|\right)f_{2}^{\#}\left(t,x+t\left(v-v'\right),v'\right)\left(g_{1}^{\#}\left(t,x+t\left(v-u'\right),u'\right)\right.\\
&&\hspace{3cm}\left.-g_{2}^{\#}\left(t,x+t\left(v-u'\right),u'\right)\right)dwdu\\
\end{eqnarray*}

\begin{eqnarray*}
&&+\int_{\mathbb{R}^{n}}\int_{\left|w\right|=1}B\left(w,\left|u-v\right|\right)f_{2}^{\#}\left(t,x+t\left(v-u\right),u\right)\left(g_{2}^{\#}\left(t,x,v\right)-g_{1}^{\#}\left(t,x,v\right)\right)dwdu\\
&&+\int_{\mathbb{R}^{n}}\int_{\left|w\right|=1}B\left(w,\left|u-v\right|\right)f_{2}^{\#}\left(t,x,v\right)\left(g_{2}^{\#}\left(t,x+t\left(v-u\right),u\right)\right.\\
&&\hspace{3cm}\left.-g_{1}^{\#}\left(t,x+t\left(v-u\right),u\right)\right)dwdu\\
&&+\int_{\mathbb{R}^{n}}\int_{\left|w\right|=1}B\left(w,\left|u-v\right|\right)g_{1}^{\#}\left(t,x+t\left(v-u'\right),u'\right)\left(g_{1}^{\#}\left(t,x+t\left(v-v'\right),v'\right)\right.\\
&&\hspace{3cm}\left.-g_{2}^{\#}\left(t,x+t\left(v-v'\right),v'\right)\right)dwdu\\
&&+\int_{\mathbb{R}^{n}}\int_{\left|w\right|=1}B\left(w,\left|u-v\right|\right)g_{2}^{\#}\left(t,x+t\left(v-v'\right),v'\right)\left(g_{1}^{\#}\left(t,x+t\left(v-u'\right),u'\right)\right.\\
&&\hspace{3cm}\left.-g_{2}^{\#}\left(t,x+t\left(v-u'\right),u'\right)\right)dwdu\\
&&+\int_{\mathbb{R}^{n}}\int_{\left|w\right|=1}B\left(w,\left|u-v\right|\right)g_{2}^{\#}\left(t,x+t\left(v-u\right),u\right)\left(g_{2}^{\#}\left(t,x,v\right)-g_{1}^{\#}\left(t,x,v\right)\right)dwdu\\
&&+\int_{\mathbb{R}^{n}}\int_{\left|w\right|=1}B\left(w,\left|u-v\right|\right)g_{1}^{\#}\left(t,x,v\right)\left(g_{2}^{\#}\left(t,x+t\left(v-u\right),u\right)-g_{1}^{\#}\left(t,x+t\left(v-u\right),u\right)\right)dwdu\\
\end{eqnarray*}

So 
\medskip

$\left\Vert F\left(g_{1}\right)-F\left(g_{2}\right)\right\Vert _{L^{1}\left([0,T]\times\mathbb{R}^{n}\times\mathbb{R}^{n}\right)}$

\begin{eqnarray*}
	&\leq&\int_{\mathbb{R}^{n}}\int_{\left|w\right|=1}\left|B\left(w,\left|u-v\right|\right)\right|f_{2}^{\#}\left(t,x+t\left(v-v'\right),v'\right)dwdu\left\Vert g_{1}^{\#}-g_{2}^{\#}\right\Vert _{L^{1}\left([0,T]\times\mathbb{R}^{n}\times\mathbb{R}^{n}\right)}\\
	&&+\int_{\mathbb{R}^{n}}\int_{\left|w\right|=1}\left|B\left(w,\left|u-v\right|\right)\right|f_{2}^{\#}\left(t,x+t\left(v-v'\right),v'\right)dwdu\left\Vert g_{1}^{\#}-g_{2}^{\#}\right\Vert _{L^{1}\left([0,T]\times\mathbb{R}^{n}\times\mathbb{R}^{n}\right)}\\
	&&+\int_{\mathbb{R}^{n}}\int_{\left|w\right|=1}\left|B\left(w,\left|u-v\right|\right)\right|f_{2}^{\#}\left(t,x,t\left(v-u\right),u\right)dwdv\left\Vert g_{1}^{\#}-g_{2}^{\#}\right\Vert _{L^{1}\left([0,T]\times\mathbb{R}^{n}\times\mathbb{R}^{n}\right)}\\
	&&+\int_{\mathbb{R}^{n}}\int_{\left|w\right|=1}\left|B\left(w,\left|u-v\right|\right)\right|f_{2}^{\#}\left(t,x,v\right)dwdu\left\Vert g_{1}^{\#}-g_{2}^{\#}\right\Vert _{L^{1}\left([0,T]\times\mathbb{R}^{n}\times\mathbb{R}^{n}\right)}\\
	&&+\int_{\mathbb{R}^{n}}\int_{\left|w\right|=1}\left|B\left(w,\left|u-v\right|\right)\right|g_{1}^{\#}\left(t,x+t\left(v-u'\right),u'\right)dwdu\left\Vert g_{1}^{\#}-g_{2}^{\#}\right\Vert _{L^{1}\left([0,T]\times\mathbb{R}^{n}\times\mathbb{R}^{n}\right)}\\
	&&+\int_{\mathbb{R}^{n}}\int_{\left|w\right|=1}\left|B\left(w,\left|u-v\right|\right)\right|g_{2}^{\#}\left(t,x+t\left(v-v'\right),v'\right)dwdu\left\Vert g_{1}^{\#}-g_{2}^{\#}\right\Vert _{L^{1}\left([0,T]\times\mathbb{R}^{n}\times\mathbb{R}^{n}\right)}\\
	&&+\int_{\mathbb{R}^{n}}\int_{\left|w\right|=1}\left|B\left(w,\left|u-v\right|\right)\right|g_{2}^{\#}\left(t,x+t\left(v-u\right),u\right)dwdu\left\Vert g_{1}^{\#}-g_{2}^{\#}\right\Vert _{L^{1}\left([0,T]\times\mathbb{R}^{n}\times\mathbb{R}^{n}\right)}\\
	&&+\int_{\mathbb{R}^{n}}\int_{\left|w\right|=1}\left|B\left(w,\left|u-v\right|\right)\right|g_{1}^{\#}\left(t,x,v\right)dwdu\left\Vert g_{1}^{\#}-g_{2}^{\#}\right\Vert _{L^{1}\left([0,T]\times\mathbb{R}^{n}\times\mathbb{R}^{n}\right)}.\\
\end{eqnarray*}

\begin{eqnarray*}
	&\leq&\frac{b_{1}}{\left|v\right|}\left(1+\left|v\right|^{\mu}\right)\int_{\mathbb{R}^{n}}\int_{\left|w\right|=1}\left|\left(v-u,w\right)\right|f_{2}^{\#}\left(t,x+t\left(v-v'\right),v'\right)dwdu\left\Vert g_{1}^{\#}-g_{2}^{\#}\right\Vert _{L^{1}\left([0,T]\times\mathbb{R}^{n}\times\mathbb{R}^{n}\right)}\\
	&&+\frac{b_{1}}{\left|v\right|}\left(1+\left|v\right|^{\mu}\right)\int_{\mathbb{R}^{n}}\int_{\left|w\right|=1}\left|\left(v-u,w\right)\right|f_{2}^{\#}\left(t,x+t\left(v-v'\right),v'\right)dwdu\left\Vert g_{1}^{\#}-g_{2}^{\#}\right\Vert _{L^{1}\left([0,T]\times\mathbb{R}^{n}\times\mathbb{R}^{n}\right)}\\
	&&+\frac{b_{1}}{\left|v\right|}\left(1+\left|v\right|^{\mu}\right)\int_{\mathbb{R}^{n}}\int_{\left|w\right|=1}\left|\left(v-u,w\right)\right|f_{2}^{\#}\left(t,x+t\left(v-u\right),u\right)dwdu\left\Vert g_{1}^{\#}-g_{2}^{\#}\right\Vert _{L^{1}\left([0,T]\times\mathbb{R}^{n}\times\mathbb{R}^{n}\right)}\\
	&&+\frac{b_{1}}{\left|v\right|}\left(1+\left|v\right|^{\mu}\right)\int_{\mathbb{R}^{n}}\int_{\left|w\right|=1}\left|\left(v-u,w\right)\right|f_{2}^{\#}\left(t,x,v\right)dwdu\left\Vert g_{1}^{\#}-g_{2}^{\#}\right\Vert _{L^{1}\left([0,T]\times\mathbb{R}^{n}\times\mathbb{R}^{n}\right)}\\
	&&+\frac{b_{1}}{\left|v\right|}\left(1+\left|v\right|^{\mu}\right)\int_{\mathbb{R}^{n}}\int_{\left|w\right|=1}\left|\left(v-u,w\right)\right|g_{1}^{\#}\left(t,x+t\left(v-u'\right),u'\right)dwdu\left\Vert g_{1}^{\#}-g_{2}^{\#}\right\Vert _{L^{1}\left([0,T]\times\mathbb{R}^{n}\times\mathbb{R}^{n}\right)}\\
	&&+\frac{b_{1}}{\left|v\right|}\left(1+\left|v\right|^{\mu}\right)\int_{\mathbb{R}^{n}}\int_{\left|w\right|=1}\left|\left(v-u,w\right)\right|g_{2}^{\#}\left(t,x+t\left(v-v'\right),v'\right)dwdu\left\Vert g_{1}^{\#}-g_{2}^{\#}\right\Vert _{L^{1}\left([0,T]\times\mathbb{R}^{n}\times\mathbb{R}^{n}\right)}\\
	&&+\frac{b_{1}}{\left|v\right|}\left(1+\left|v\right|^{\mu}\right)\int_{\mathbb{R}^{n}}\int_{\left|w\right|=1}\left|\left(v-u,w\right)\right|g_{2}^{\#}\left(t,x+t\left(v-u\right),u\right)dwdu\left\Vert g_{1}^{\#}-g_{2}^{\#}\right\Vert _{L^{1}\left([0,T]\times\mathbb{R}^{n}\times\mathbb{R}^{n}\right)}\\
	&&+\frac{b_{1}}{\left|v\right|}\left(1+\left|v\right|^{\mu}\right)\int_{\mathbb{R}^{n}}\int_{\left|w\right|=1}\left|\left(v-u,w\right)\right|g_{1}^{\#}\left(t,x,v\right)dwdu\left\Vert g_{1}^{\#}-g_{2}^{\#}\right\Vert _{L^{1}\left([0,T]\times\mathbb{R}^{n}\times\mathbb{R}^{n}\right)}.\\
\end{eqnarray*}

Then, 
\[\left\Vert F\left(g_{1}\right)-F\left(g_{2}\right)\right\Vert _{L^{1}\left([0,T]\times\mathbb{R}^{n}\times\mathbb{R}^{n}\right)}\leq L\left\Vert g_{1}^{\#}-g_{2}^{\#}\right\Vert _{L^{1}\left([0,T]\times\mathbb{R}^{n}\times\mathbb{R}^{n}\right)}
\]

Therefore, by Banach's Fixed Point Theorem, operator  $F$ has a single Banach fixed point, that is, $\mathcal{G}\equiv0$, then $f_{1}=f_{2}$.
\end{proof}

\section{Renormalized solutions.}
	
\begin{thm}If it is supposed $\beta_{1}\neq\beta_{2}$, $\beta_{1}\geq\beta_{2}+\sqrt{c}$
y ${\displaystyle \int_{0}^{T}Q^{\#}\left(g,g\right)dt<1}$, then there is only one solution to the Boltzmann renormalized equation.
\end{thm}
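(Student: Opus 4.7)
The plan is to adapt the Banach fixed point argument of Theorem~1 to the renormalized setting. Along the free-transport characteristics $x\mapsto x+vt$, the renormalized equation (\ref{eq:3}) integrates to
\[
\beta_i(f_i^{\#})(t,x,v)=\beta_i(f_0)(x,v)+\int_0^t \beta_i'(f_i)\,Q^{\#}(f_i,f_i)(\tau,x,v)\,d\tau,\qquad i=1,2,
\]
so two renormalized solutions $f_1,f_2$ with the same initial datum satisfy an integrated identity in which the renormalizations $\beta_1,\beta_2$ enter explicitly, exactly as in the passage from (\ref{eq:1}) to (\ref{eq:2}) used in Section~2.

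First I would subtract the two integrated identities for $g=f_1-f_2$ and use the Lipschitz property of each $\beta_i$ together with the separation hypothesis $\beta_1\geq\beta_2+\sqrt{c}$ to convert a control on $|\beta_1(f_1^{\#})-\beta_2(f_2^{\#})|$ into a one-sided $L^1$-estimate on $g^{\#}$. This is the step that really uses $\beta_1\neq\beta_2$: the strict gap $\sqrt{c}$ between the two renormalizations is what allows the combined Lipschitz comparison $|\beta_1(a)-\beta_2(b)|\geq \sqrt{c}-\mathrm{Lip}\,|a-b|$ to be inverted, producing a bound of $|g^{\#}|$ in terms of the same integral of $Q^{\#}(f_2+g,f_2+g)-Q^{\#}(f_2,f_2)$ that appears in Theorem~1.

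Next I would reuse the bilinear expansion of $Q^{\#}(f_2+g,f_2+g)-Q^{\#}(f_2,f_2)$ carried out in the proof of Theorem~1 and apply the hypotheses $B_0)$–$B_3)$ on the collision kernel together with the standing smallness assumption $\int_0^T Q^{\#}(g,g)\,dt<1$. This last condition plays precisely the role of the Lipschitz constant and yields, for the operator $F$ defined as before,
\[
\|F(g_1)-F(g_2)\|_{L^1([0,T]\times\mathbb{R}^n\times\mathbb{R}^n)}\leq L\,\|g_1^{\#}-g_2^{\#}\|_{L^1([0,T]\times\mathbb{R}^n\times\mathbb{R}^n)},\qquad 0\leq L<1.
\]
Banach's fixed point theorem then forces $g^{\#}\equiv0$, hence $f_1=f_2$ in $L^1$.

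The main obstacle will be the passage from the renormalized identity for $\beta_1(f_1^{\#})-\beta_2(f_2^{\#})$ to a quantitative pointwise bound on $|f_1^{\#}-f_2^{\#}|$: because $\beta_1\neq\beta_2$, the two renormalized solutions cannot be compared directly through a single $\beta$, and one must combine the Lipschitz bound $|\beta_i'(t)|\leq c/(1+t)$ with the separation $\beta_1\geq\beta_2+\sqrt{c}$ to obtain a comparison that is still linear in $g$. Once that inversion is in place, the remaining estimates are the same bilinear manipulations already carried out in Theorem~1, and the assumption $\int_0^T Q^{\#}(g,g)\,dt<1$ furnishes exactly the contraction constant needed to close the argument.
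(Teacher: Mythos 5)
There is a genuine gap here, and your route also diverges from the paper's in a way that matters. First, you attach $\beta_i$ to $f_i$, but in the theorem $\beta_1$ and $\beta_2$ are two competing renormalizations applied to the \emph{same} pair $f_2$, $f_2+g$ (a renormalized solution satisfies (\ref{eq:3}) for \emph{every} admissible $\beta$). The paper's fixed-point operator acts on objects of the form $\beta^{\#}(f_2+g)-\beta^{\#}(f_2)$, and the contraction is measured in the $\beta$ variable:
\[
\bigl\Vert F\bigl[\beta_1^{\#}(f_2+g)-\beta_1^{\#}(f_2)-\beta_2^{\#}(f_2+g)+\beta_2^{\#}(f_2)\bigr]\bigr\Vert_{L^1}\le\Bigl(\int_0^T\bigl[Q^{\#}(f_2+g,f_2+g)-Q^{\#}(f_2,f_2)\bigr]dt\Bigr)\bigl\Vert\beta_1^{\#}-\beta_2^{\#}\bigr\Vert_{L^1},
\]
where the bracketed integral collapses to $\int_0^T Q^{\#}(g,g)\,dt<1$ after the cross term $2\int_0^T Q^{\#}(f_2,g)\,dt$ is discarded via the paper's convention that $Q(f,g)=0$ for $f\neq g$, and the hypotheses $|\beta'(t)|\le c/(1+t)$ and $\beta_1\ge\beta_2+\sqrt{c}$ are used to pass from $[\beta_1^{\#}-\beta_2^{\#}]'$ to $c/(\beta_1^{\#}-\beta_2^{\#})\le\beta_1^{\#}-\beta_2^{\#}$. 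You instead try to run the contraction in $g$, reusing the operator $F(g)$ of Theorem 1; but the constant $L<1$ there comes from the smallness hypotheses on $\int|f_2|\,|B|$ and $\int|g_i|\,|B|$, which are not assumed in this theorem, and $\int_0^T Q^{\#}(g,g)\,dt<1$ does not substitute for them: it enters the paper's argument as the Lipschitz constant of the map in the $\beta$ variable, not in the $g$ variable.

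Second, your key ``inversion'' step does not go through as stated. The inequality $|\beta_1(a)-\beta_2(b)|\ge\sqrt{c}-\mathrm{Lip}\,|a-b|$ rearranges to $\mathrm{Lip}\,|a-b|\ge\sqrt{c}-|\beta_1(a)-\beta_2(b)|$, which is a \emph{lower} bound on $|a-b|=|g^{\#}|$, not the upper bound you need; and the only Lipschitz information available, $|\beta_i'(t)|\le c/(1+t)$, gives $\mathrm{Lip}\le c$ with no smallness. So the passage from the renormalized identities to a quantitative $L^1$ control of $g^{\#}$ --- the step you yourself flag as the main obstacle --- is not actually achieved, and without it your Banach argument never closes. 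Note that the paper sidesteps this entirely: it never bounds $g^{\#}$ by $\beta$-differences; it concludes $\beta^{\#}(f_2+g)-\beta^{\#}(f_2)=0$ from uniqueness of the fixed point and only then infers $g=0$.
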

	
\begin{proof} Let $f_{1}$ and $f_{2}$ be two renormalized solutions of Boltzmann's equation, and
	
	$$
	\frac{\partial\beta\left(f_{1}\right)}{\partial t}+v\cdot\nabla_{x}\beta\left(f_{1}\right)=\beta'\left(f_{1}\right)Q\left(f_{1},f_{1}\right),
	$$
	
	$$
	\frac{\partial\beta\left(f_{2}\right)}{\partial t}+v\cdot\nabla_{x}\beta\left(f_{2}\right)=\beta'\left(f_{2}\right)Q\left(f_{2},f_{2}\right),
	$$
	
	\[
	\frac{\partial}{\partial t}\left[\beta\left(f_{1}\right)-\beta\left(f_{2}\right)\right]+v\cdot\nabla_{x}\left[\beta\left(f_{1}\right)-\beta\left(f_{2}\right)\right]=\beta'\left(f_{1}\right)Q\left(f_{1},f_{1}\right)-\beta'\left(f_{2}\right)Q\left(f_{2},f_{2}\right).
	\]
	
Let $g=f_{1}-f_{2}$, so 

\begin{eqnarray*}
\frac{\partial}{\partial t}\left[\beta\left(f_{2}+g\right)-\beta\left(f_{2}\right)\right]+v\cdot\nabla_{x}\left[\beta\left(f_{2}+g\right)-\beta\left(f_{2}\right)\right]&=&\beta'\left(f_{2}+g\right)Q\left(f_{2}+g,f_{2}+g\right)\\
& &\hspace{0.5cm}-\beta'\left(f_{2}\right)Q\left(f_{2},f_{2}\right).
\end{eqnarray*}
	
Let

\[
\left[\beta^{\#}\left(f_{2}+g\right)-\beta^{\#}\left(f_{2}\right)\right]\left(t,x,v\right)=\beta\left(f_{2}+g\right)\left(t,x+vt,v\right)-\beta\left(f_{2}\right)\left(t,x+vt,v\right),
\]
so, 
\[
\frac{d}{dt}\left[\beta^{\#}\left(f_{2}+g\right)-\beta^{\#}\left(f_{2}\right)\right]=\beta'^{\#}\left(f_{2}+g\right)Q^{\#}\left(f_{2}+g,f_{2}+g\right)-\beta'^{\#}\left(f_{2}\right)Q^{\#}\left(f_{2},f_{2}\right).
\]

Then
\begin{eqnarray*}&&\beta^{\#}\left(f_{2}+g\right)\left(t,x,v\right)-\beta^{\#}\left(f_{2}\right)\left(t,x,v\right)\\
&& \hspace{1.7cm}=\beta^{\#}\left(f_{2}+g\right)\left(0,x,v\right)-\beta^{\#}\left(f_{2}\right)\left(0,x,v\right)\\
& &\hspace{1.7cm}+\int_{0}^{T}\left[\beta'^{\#}\left(f_{2}+g\right)Q^{\#}\left(f_{2}+g,f_{2}+g\right)-\beta'^{\#}\left(f_{2}\right)Q^{\#}\left(f_{2},f_{2}\right)\right]dt.
\end{eqnarray*}
	
If it is defined
\begin{eqnarray*}&&F\left[\left(\beta^{\#}\left(f_{2}+g\right)-\beta^{\#}\left(f_{2}\right)\right)\left(t,x+vt,v\right)\right]\\
&& \hspace{1.5cm}=\beta^{\#}\left(f_{2}+g\right)\left(0,x,v\right)-\beta^{\#}\left(f_{2}\right)\left(0,x,v\right)\\
& & \hspace{1.5cm}+\int_{0}^{T}\left[\beta'^{\#}\left(f_{2}+g\right)Q^{\#}\left(f_{2}+g,f_{2}+g\right)-\beta'^{\#}\left(f_{2}\right)Q^{\#}\left(f_{2},f_{2}\right)\right]dt,
\end{eqnarray*}

we notice that if $g=0$, then it is a critical point of $F$ . Let's verify that it is the only one, to conclude that  $f_{1}=f_{2}$. For this we will use in Banach's Fixed Point Theorem, in effect:
	
$F\left[\beta_{1}^{\#}\left(f_{2}+g\right)-\beta_{1}^{\#}\left(f_{2}\right)-\beta_{2}^{\#}\left(f_{2}+g\right)+\beta_{2}^{\#}\left(f_{2}\right)\right]$	
	
\begin{eqnarray*}
&=&F\left[\beta_{1}^{\#}\left(f_{2}+g\right)-\beta_{2}^{\#}\left(f_{2}+g\right)+\beta_{2}^{\#}\left(f_{2}\right)-\beta_{1}^{\#}\left(f_{2}\right)\right]\\
&=&\int_{0}^{T}\left[\beta_{1}'^{\#}\left(f_{2}+g\right)Q^{\#}\left(f_{2}+g,f_{2}+g\right)-\beta_{1}'^{\#}\left(f_{2}\right)Q^{\#}\left(f_{2},f_{2}\right)\right]dt\\
& &-\int_{0}^{T}\left[\beta_{2}'^{\#}\left(f_{2}+g\right)Q^{\#}\left(f_{2}+g,f_{2}+g\right)-\beta_{2}'^{\#}\left(f_{2}\right)Q^{\#}\left(f_{2},f_{2}\right)\right]dt\\
&=&\int_{0}^{T}\left[\beta_{1}'^{\#}\left(f_{2}+g\right)Q^{\#}\left(f_{2}+g,f_{2}+g\right)-\beta_{2}'^{\#}\left(f_{2}+g\right)Q^{\#}\left(f_{2}+g,f_{2}+g\right)\right]dt\\
  & &+\int_{0}^{T}\left[\beta_{2}'^{\#}\left(f_{2}\right)Q^{\#}\left(f_{2},f_{2}\right)-\beta_{1}'^{\#}\left(f_{2}\right)Q^{\#}\left(f_{2},f_{2}\right)\right]dt\\
&=&\int_{0}^{T}\left[\beta_{1}'^{\#}\left(f_{2}+g\right)-\beta_{2}'^{\#}\left(f_{2}+g\right)\right]Q^{\#}\left(f_{2}+g,f_{2}+g\right)dt\\
&&-\int_{0}^{T}\left[\beta_{2}'^{\#}\left(f_{2}\right)-\beta_{1}'^{\#}\left(f_{2}\right)\right]Q^{\#}\left(f_{2},f_{2}\right)dt\\
&=& \int_{0}^{T}\left[\beta_{1}{}^{\#}\left(f_{2}+g\right)-\beta_{2}{}^{\#}\left(f_{2}+g\right)\right]'Q^{\#}\left(f_{2}+g,f_{2}+g\right)dt\\
&&+\int_{0}^{T}\left[\beta_{2}\#\left(f_{2}\right)-\beta_{1}\#\left(f_{2}\right)\right]'Q^{\#}\left(f_{2},f_{2}\right)dt\\
&\leq&\int_{0}^{T}\frac{c}{1+\beta_{1}^{\#}\left(f_{2}+g\right)-\beta_{2}^{\#}\left(f_{2}+g\right)}Q^{\#}\left(f_{2}+g,f_{2}+g\right)dt\\
&&-\int_{0}^{T}\frac{c}{1+\beta_{2}^{\#}\left(f_{2}\right)-\beta_{1}^{\#}\left(f_{2}\right)}Q^{\#}\left(f_{2}+g,f_{2}+g\right)dt\\
\end{eqnarray*}

\begin{eqnarray*}
&=&\int_{0}^{T}\frac{c}{1+\left(\beta_{1}^{\#}-\beta_{2}^{\#}\right)}\left[Q^{\#}\left(f_{2}+g,f_{2}+g\right)-Q^{\#}\left(f_{2},f_{2}\right)\right]dt\\
&\leq&\int_{0}^{T}\frac{c}{\left(\beta_{1}^{\#}-\beta_{2}^{\#}\right)}\left[Q^{\#}\left(f_{2}+g,f_{2}+g\right)-Q^{\#}\left(f_{2},f_{2}\right)\right]dt\\
&\leq&\int_{0}^{T}\left(\beta_{1}^{\#}-\beta_{2}^{\#}\right)\left[Q^{\#}\left(f_{2}+g,f_{2}+g\right)-Q^{\#}\left(f_{2},f_{2}\right)\right]dt\,\,\,\,\textrm{(si}\,\,\,\beta_{1}\geq\beta_{2}+\sqrt{c}\textrm{).}
\end{eqnarray*}
	
So that 

$$\left\Vert F\left[\beta_{1}^{\#}\left(f_{2}+g\right)-\beta_{1}^{\#}\left(f_{2}\right)-\beta_{2}^{\#}\left(f_{2}+g\right)+\beta_{2}^{\#}\left(f_{2}\right)\right]\right\Vert _{L^{1}\left([0,T]\times\mathbb{R}^{n}\times\mathbb{R}^{n}\right)},$$

is equal or lower than 

$$\int_{0}^{T}\left[Q^{\#}\left(f_{2}+g,f_{2}+g\right)-Q^{\#}\left(f_{2}.f_{2}\right)\right]dt\left\Vert \beta_{1}^{\#}-\beta_{2}^{\#}\right\Vert _{L^{1}\left([0,T]\times\mathbb{R}^{n}\times\mathbb{R}^{n}\right)}.$$

Then

$\displaystyle\int_{0}^{T}\left[Q^{\#}\left(f_{2}+g,f_{2}+g\right)-Q^{\#}\left(f_{2}.f_{2}\right)\right]$

\begin{eqnarray*}
& = & \int_{0}^{T}\left[\int_{\mathbb{R}^{n}}\int_{\left|w\right|=1}B\left(w,\left|u-v\right|\right)\left.(f_{2}^{\#}\left(v'\right)f_{2}^{\#}\left(u'\right)+f_{2}^{\#}\left(v'\right)g^{\#}\left(u'\right)\right.\right.\\
		&&+\left.\left.g^{\#}\left(v'\right)f_{2}\left(u'\right)+g^{\#}\left(v'\right)g^{\#}\left(u'\right)+f_{2}^{\#}\left(v\right)f_{2}^{\#}\left(u\right)\right.\right.\\
		&&+\left.f_{2}^{\#}\left(v\right)g^{\#}\left(u\right)-g^{\#}\left(v\right)f_{2}^{\#}\left(u\right)-g^{\#}\left(v\right)g^{\#}\left(u\right)-f_{2}^{\#}\left(v'\right)f_{2}^{\#}\left(u'\right)+f_{2}^{\#}\left(v\right)f_{2}^{\#}\left(u\right)\right]dudwdt\\
&=&\int_{0}^{T}\int_{\mathbb{R}^{n}}\int_{\left|w\right|=1}B\left(w,\left|u-v\right|\right)\left[f_{2}^{\#}\left(v'\right)g^{\#}\left(u'\right)+g^{\#}\left(v'\right)f_{2}\left(u'\right)-f_{2}^{\#}\left(v\right)g^{\#}\left(u\right)-g^{\#}\left(v\right)f_{2}^{\#}\left(u\right)\right]dudwdt\\
		&&+\int_{0}^{T}\int_{\mathbb{R}^{n}}\int_{\left|w\right|=1}B\left(w,\left|u-v\right|\right)\left[g^{\#}\left(v'\right)g^{\#}\left(u'\right)-g^{\#}\left(v\right)g^{\#}\left(u\right)\right]dudwdt\\
&=&2\int_{0}^{T}Q^{\#}\left(f_{2},g\right)dt+\int_{0}^{T}Q^{\#}\left(g,g\right)dt\\
&=&\int_{0}^{T}Q^{\#}\left(g,g\right)dt\\
&<&1.
\end{eqnarray*}
	
So by Banach's Fixed Point Theorem, exists only a single fixed point of  $F$, that is $\beta^{\#}\left(f_{2}+g\right)-\beta^{\#}\left(f_{2}\right)=0$,
	we only are interested in the fixed points of  $\beta^{\#}$, therefore
	$f_{2}+g-f_{2}=0$ implies $g=0$, then $f_{1}=f_{2}$.
    \end{proof}


\begin{thebibliography}{X}
	
	\bibitem{Aza}\textsc{Azano, K}, \textit{On the Global Solutions of the initial boundary value problem for the Boltzmann equation with an external force}. Transport Theory and Statistical Physics. \textbf{19} (4-6), pg. 735-761, 1987.
	
	\bibitem{Car1}\textsc{Carleman, T}, \textit{Probl\`emes math\'ematiques dans la th\'eorie cin\'etique des gaz.}. Uppsala, Almqvist \& Wiksells boktr. 1957.
	
	\bibitem{Car2}\textsc{Carleman, T}, \textit{Sur la théorie de l'\'equation intégrodifférentielle de Boltzmann.}. Acta Math. \textbf{60} (91-146), pg. 91-146. 1933.
	
	\bibitem{Dip} \textsc{DiPerna R.} and \textsc{Lions P.}, \textit{On the Cauchy Problem for Boltzmann Equations: Global Existence and Weak Stability}. Annals of Mathematics \textbf{130}, pg. 321-366, 1960.
	
	\bibitem{Gal1}\textsc{Galeano A, Rafael},\textit{The Boltzmann equation with force term near the vacuum}. Revista de la Uni\'on Matem\'atica Argentina. \textbf{1}. pg. 55-63, 2007.
	
	\bibitem{Gal2}\textsc{Galeano A, Rafael}, \textsc{Ortega P, Pedro} y \textsc{Avila V, Mar\'ia}, \textit{Small data existence for the boltzmann equation in $L^{1}$}. Revista de Matem\'aticas Teor\'ia y Aplicaciones. \textbf{19}, pg. 79-87, 2012.
	
	\bibitem{Gra}\textsc{Grand H.}, \textit{Asymptotic Equivalence of the Navier-Stokes and Nonlinear Boltzmann Equations}. In Proc. Sympos. Appl. Math., Vol. XVII. Amer. Math. Soc., Providence, R.I., 1965, pp. 154-183.
	
	\bibitem{Nis} \textsc{Nishida T.} and \textsc{Imai, K.}, \textit{Global solutions to the initial value problem for the nonlinear Boltzmann equation}. Publ. Res. Inst. Math. Sci. 12, 1 (1976/77), 229–239.
	
	
\end{thebibliography}
\end{document}